\documentclass{amsart}
\usepackage[headings]{fullpage}
\usepackage{amsfonts,amssymb,amsthm,amsmath,euscript}
\usepackage{units}
\usepackage{subfigure}
\usepackage{graphicx}
\usepackage{tikz}
\usepackage{pinlabel}
\usepackage[colorlinks,citecolor=blue,linkcolor=red]{hyperref}
\input{xy}
\xyoption{all}

\newcommand{\Out}{\mathrm{Out}}

\newcommand{\Aut}{\mathrm{Aut}}

\newcommand{\norm}[1]{\left\Vert#1\right\Vert}

\newcommand{\R}{{\mathbb R}}

\newcommand{\Z}{{\mathbb Z}}

\newcommand{\A}{{\mathcal A}}

\newcommand{\Csec}{\EuScript{S}}

\newcommand{\BNS}{\Sigma}

\newcommand{\Hom}{\mathrm {Hom}}





\theoremstyle{plain}
\newtheorem{theorem}{Theorem}

\newtheorem{prop-defn}[theorem]{Proposition-Definition}

\newtheorem*{theorem:Lipschitzflows}{Theorem \ref{T:lipschtiz flows}} 
\newtheorem*{theorem:conesections}{Theorem \ref{T:cone_of_sections}}
\newtheorem*{theorem:splittings}{Theorem \ref{T:splittings}}
\newtheorem*{theorem:determinant}{Theorem \ref{T:determinant formula}}
\newtheorem*{theorem:continuity-convexity}{Theorem \ref{T:continuity/convexity again}}
\newtheorem*{theorem:cones-are-equal}{Theorem \ref{T:cones_are_equal}}
\newtheorem*{theorem:cone-is-BNScpt}{Theorem \ref{T:BNS}}
\newtheorem*{theorem:transversefoliations}{Theorem \ref{T:transverse foliations}}
\theoremstyle{definition}

\newtheorem*{remark*}{Remark}
\newtheorem*{remarks*}{Remarks}

\makeatletter
\let\c@equation\c@theorem
\makeatother
\numberwithin{equation}{section}


\setcounter{tocdepth}{1}

\begin{document}

\title[Unbounded asymmetry of stretch factors]{Unbounded asymmetry of stretch factors}
\author{Spencer Dowdall, Ilya Kapovich, and Christopher J. Leininger}

\address{\tt  Department of Mathematics, University of Illinois at Urbana-Champaign,
  1409 West Green Street, Urbana, IL 61801
  \newline \indent http://www.math.uiuc.edu/\~{}dowdall/, http://www.math.uiuc.edu/\~{}kapovich/, http://www.math.uiuc.edu/\~{}clein/} \email{\tt dowdall@illinois.edu,  kapovich@math.uiuc.edu, clein@math.uiuc.edu}


  
\begin{abstract}
A result of Handel--Mosher guarantees that the ratio of logarithms of stretch factors of any fully irreducible automorphism of the free group $F_N$ and its inverse is bounded by a constant $C_N$.  In this short note we show that this constant $C_N$ cannot be chosen independent of $N$.
\end{abstract}

\thanks{The first author was partially supported by the NSF
  postdoctoral fellowship, NSF MSPRF no. 1204814. The second author
  was partially supported by the NSF grant DMS-0904200 and by
  the Simons Foundation Collaboration grant no. 279836. The third
  author was partially supported by the NSF grant DMS-1207183. The third author acknowledges support from U.S. National Science Foundation grants DMS 1107452, 1107263, 1107367 ``GEAR Network".}

\subjclass[2010]{Primary 20F65, Secondary 57M, 37B, 37E}

\maketitle



Let $F_N$ be the free group of rank $N \geq 2$. An outer automorphism $\varphi\in \Out(F_N)$ is said to be \emph{fully irreducible} if no power of $\varphi$ preserves the conjugacy class of any proper free factor of $F_N$. In this case $\varphi$ has a well defined \emph{stretch factor} $\lambda(\varphi)$, which, for any non-$\varphi$--periodic conjugacy class $\alpha$ in $F_N$ and a free basis $X$ of $F_N$,  is given by
\[\lambda(\varphi) = \lim_{n\to \infty} \sqrt[n]{\norm{\varphi^n(\alpha)}_X},\]
where $\norm{\cdot}_X$ denotes cyclically reduced word length with respect to $X$.  As was observed in \cite{BH92} (see also \cite{HandelMosher-Parageom}), there exist fully irreducible elements $\varphi \in \Out(F_N)$ with the property that $\varphi$ and $\varphi^{-1}$ have {\em different} stretch factors:
\[ \lambda(\varphi) \neq \lambda(\varphi^{-1}).\]
However, the following result from \cite{HandelMosher-expansion} describes the extent to which they can differ.  To state their result precisely, let $N \geq 2$ and set
\[ C_N = \sup_{\varphi} \frac{\log(\lambda(\varphi))}{\log(\lambda(\varphi^{-1}))},\]
where $\varphi$ ranges over all fully irreducible elements of $\Out(F_N)$. 
\begin{theorem}[Handel-Mosher] \label{T:HM}
For $N\ge 2$, $C_N < \infty$.
\end{theorem}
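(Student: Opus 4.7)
The plan is to use Bestvina--Handel train-track technology and convert the question into a combinatorial bound on the transition matrices. Given a fully irreducible $\varphi \in \Out(F_N)$, both $\varphi$ and $\varphi^{-1}$ admit irreducible train-track representatives $f\colon G \to G$ and $g\colon H \to H$ whose transition matrices $A_f$ and $A_g$ have Perron--Frobenius eigenvalues $\lambda(\varphi)$ and $\lambda(\varphi^{-1})$ respectively. After collapsing valence-one and absorbing valence-two vertices, both $G$ and $H$ have minimum valence at least $3$, and the Euler characteristic relation $V - E = 1 - N$ together with $3V \le 2E$ forces $\abs{E(G)}, \abs{E(H)} \le 3N - 3$. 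Thus $A_f$ and $A_g$ are non-negative integer matrices of size bounded purely in terms of $N$ (though with \emph{a priori} unbounded entries).

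Next I would fix a marked homotopy equivalence $\eta\colon G \to H$ realizing the change of marking, so that $g^{-1} \circ \eta$ and $\eta \circ f$ are homotopic. Iterating gives $\eta \circ f^n \simeq g^{-n} \circ \eta$, so that for any legal edge path $\gamma$ in $G$ one has a conjugacy-class equality between $\eta(f^n(\gamma))$ and $g^{-n}(\eta(\gamma))$. One can then try to extract from this a uniform inequality
\[
\log \lambda(\varphi) \;\le\; k(N)\,\log \lambda(\varphi^{-1}) + c(N),
\]
with $k(N),c(N)$ depending only on $N$; since $\lambda(\varphi^{-1}) > 1$, this immediately gives $C_N < \infty$.

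The main obstacle is converting the above homotopy correspondence into the quantitative inequality above with constants depending only on $N$. A naive bounded-cancellation estimate for $\eta$ only relates the two stretch factors in the trivial direction. Handel--Mosher instead analyze the gate structure, local Whitehead graphs, and illegal-turn combinatorics of the train tracks, and isolate finite combinatorial invariants of $(G,f)$ and $(H,g)$ that simultaneously govern $\lambda(\varphi)$ and $\lambda(\varphi^{-1})$. Because the edge bound $|E| \le 3N-3$ limits the number of such invariants to a function of $N$ alone, the ratio $\log\lambda(\varphi)/\log\lambda(\varphi^{-1})$ must be uniformly bounded. The delicate part is identifying the correct combinatorial quantity and proving that it indeed dominates the asymmetry ratio; this is where essentially all of the real work of the proof lies, and where I would expect to spend the bulk of the effort.
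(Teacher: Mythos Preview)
The paper does not contain a proof of this theorem. Theorem~\ref{T:HM} is quoted as a result of Handel--Mosher \cite{HandelMosher-expansion}, with an alternate proof attributed to Algom-Kfir--Bestvina \cite{A-KB}; the paper's own contribution is the \emph{next} theorem (that $\limsup_N C_N = \infty$). So there is no ``paper's own proof'' to compare your proposal against.

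As for the proposal on its own terms: it is not a proof but an outline in which the essential step is explicitly deferred. The observation that a train-track graph for a fully irreducible $\varphi\in\Out(F_N)$ has at most $3N-3$ edges is correct and is indeed the starting point of both published arguments, but by itself it bounds only the \emph{size} of the transition matrices, not the ratio of the logarithms of their Perron--Frobenius eigenvalues. Your middle paragraph, iterating the homotopy $\eta\circ f^n \simeq g^{-n}\circ \eta$, does not produce the claimed inequality $\log\lambda(\varphi)\le k(N)\log\lambda(\varphi^{-1})+c(N)$: as written it only recovers that the two stretch factors compute the same conjugacy-class growth rate, which gives no comparison between them. You acknowledge this yourself in the final paragraph. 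Neither published proof proceeds quite as you speculate: Handel--Mosher carry out a detailed analysis relating a train track for $\varphi$ to one for $\varphi^{-1}$ via folding, while Algom-Kfir--Bestvina recast the statement as quasi-symmetry of the Lipschitz metric on the thick part of Outer space and argue geometrically. In either approach, the ``delicate part'' you postpone is the entire content of the theorem.
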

An alternate proof of this result was more recently given by Algom-Kfir and Bestvina \cite{A-KB}.  While the proofs of this theorem appeal to the fact that $N$ is fixed, it is not clear that this dependence is necessary.  In this short note, we prove that in fact it is.
\begin{theorem}  
With $\{C_N\}_{N \geq 2}$ defined as above, $\displaystyle{\limsup_{N \to \infty} C_N = \infty}$.
\end{theorem}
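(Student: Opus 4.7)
The plan is to produce a sequence of fully irreducible $\varphi_n \in \Out(F_{N_n})$ with $N_n \to \infty$ and $\log\lambda(\varphi_n)/\log\lambda(\varphi_n^{-1}) \to \infty$ by working inside a single ambient hyperbolic free-by-cyclic group and exploiting the authors' cone-of-sections theory. Choose a hyperbolic, fully irreducible, parageometric $\varphi \in \Out(F_r)$ with $\lambda(\varphi) \neq \lambda(\varphi^{-1})$ (for instance a small-rank Bestvina--Handel example), and form the hyperbolic group $G := F_r \rtimes_\varphi \Z$ with distinguished class $\alpha_0 \in H^1(G;\Z)$. The cone-of-sections theorem provides an open cone $\A \subset H^1(G;\R)$ containing $\alpha_0$ such that each primitive integral $\alpha \in \A$ realizes a splitting $G \cong F_{N(\alpha)} \rtimes_{\varphi_\alpha} \Z$ with $\varphi_\alpha$ fully irreducible and hyperbolic, with $N(\alpha) \asymp \|\alpha\|$, and with $\log\lambda(\varphi_\alpha) = \mathfrak{H}(\alpha)$, where $\mathfrak{H}\colon \A \to (0,\infty)$ is continuous, convex, homogeneous of degree $-1$, and vanishes on $\partial \A$. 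Applying the same theorem to $\varphi^{-1}$ yields a (generally different) cone $\A' \ni -\alpha_0$ and function $\mathfrak{H}'$ with $\log\lambda(\varphi_\alpha^{-1}) = \mathfrak{H}'(-\alpha)$ whenever $-\alpha \in \A'$.

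The essential feature is that $\A$ and $-\A'$ need not agree: they are built from the distinct train-track complexes and semiflows for $\varphi$ and $\varphi^{-1}$. Arrange, by an appropriate choice of parageometric $\varphi$, that there exists a point $\alpha_\infty \in \partial \A$ with $-\alpha_\infty \in (\A')^\circ$. By Diophantine approximation, one then obtains primitive integral classes $\alpha_n \in \A$ with $\alpha_n/\|\alpha_n\| \to \alpha_\infty/\|\alpha_\infty\|$ and $\|\alpha_n\| \to \infty$; in particular $N(\alpha_n) \to \infty$. The degree-$(-1)$ homogeneity of $\mathfrak{H}$ and $\mathfrak{H}'$ then gives
\[
\frac{\log\lambda(\varphi_{\alpha_n}^{-1})}{\log\lambda(\varphi_{\alpha_n})}
= \frac{\mathfrak{H}'(-\alpha_n/\|\alpha_n\|)}{\mathfrak{H}(\alpha_n/\|\alpha_n\|)}
\;\longrightarrow\; +\infty,
\]
since the numerator converges to the positive value $\mathfrak{H}'(-\alpha_\infty/\|\alpha_\infty\|)$ while the denominator tends to $\mathfrak{H}(\alpha_\infty/\|\alpha_\infty\|) = 0$ through positive values. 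Setting $\psi_n := \varphi_{\alpha_n}^{-1} \in \Out(F_{N(\alpha_n)})$ gives $\log\lambda(\psi_n)/\log\lambda(\psi_n^{-1}) \to \infty$, so $C_{N(\alpha_n)} \to \infty$ and the theorem follows.

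The main obstacle is establishing the cone disparity $\partial \A \cap (-\A')^\circ \neq \emptyset$ for a concrete choice of $\varphi$. This ultimately reduces to showing that the McMullen-type polynomials encoding $\mathfrak{H}$ and $\mathfrak{H}'$ have different Newton polytopes, which should be verifiable by explicit computation on a low-rank parageometric example. A backup strategy, should a single ambient $G$ not exhibit enough asymmetry, is to iterate the construction: take a sequence of parageometric seeds $\varphi^{(k)}$ whose forward/backward train tracks differ in progressively stronger ways, and combine the resulting cones to amplify the ratio as $N \to \infty$.
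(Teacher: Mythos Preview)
Your overall strategy---work inside a single free-by-cyclic group $G$ and exploit the cone-of-sections machinery to produce a projective sequence of splittings---is exactly the paper's approach, but the decisive step rests on a false assertion about the boundary behavior of $\mathfrak H$. You claim that $\mathfrak H\colon\A\to(0,\infty)$ ``vanishes on $\partial\A$'' and then use this to force the denominator $\mathfrak H(\alpha_n/\|\alpha_n\|)\to 0$. This is incorrect: log stretch factors of fully irreducibles are strictly positive, and the theory (see \cite[Theorem~F]{DKL2}) says that $\mathfrak H$, which in fact is defined on a strictly larger open cone $\Csec\supsetneq\A$, tends to $+\infty$ on $\partial\Csec$, not to $0$. (It is $1/\mathfrak H$ that extends continuously by $0$; you may be conflating the two.) At any point of $\partial\A\cap\Csec$ the function $\mathfrak H$ is finite and positive, so in your configuration the denominator converges to a positive number and the ratio stays bounded. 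Your proposed cone disparity $\alpha_\infty\in\partial\A$ with $-\alpha_\infty\in(\A')^\circ$ therefore cannot produce divergence.

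The paper repairs this by reversing the mechanism: it exhibits (via \cite[Example~8.3]{DKL2}) a primitive integral class $u_1\in\partial\A\cap\Csec$ whose kernel is \emph{not} finitely generated, so by Bieri--Neumann--Strebel $-u_1\notin\BNS(G)$ and hence $-u_1\in\partial\Csec_-$. For primitive integral $u_n\in\A$ projectively converging to $u_1$ one then has $\mathfrak H(t_nu_n)\to\mathfrak H(u_1)<\infty$ while $\mathfrak H_-(-t_nu_n)\to+\infty$, and degree $-1$ homogeneity gives $\mathfrak H_-(-u_n)/\mathfrak H(u_n)\to\infty$. Thus the correct ``cone disparity'' to seek is $u_1\in\Csec$ with $-u_1\in\partial\Csec_-$, and it is the numerator that blows up, not the denominator that vanishes. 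With this correction the argument goes through for the explicit $\varphi\in\Aut(F_3)$ of \cite[Example~5.5]{DKL1}; no speculative Newton-polytope comparison or iterated backup construction is needed.
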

\begin{proof}
The proof will appeal to a construction and analysis carried out in \cite{DKL1} and \cite{DKL2}.   To that end, let $F_3 = \langle a,b,c \rangle$ and consider the element $\varphi \in \Aut(F_3)$ defined by
\[ \varphi(a) = b \, , \quad \varphi(b) = b^{-1} a^{-1} b a c \, , \quad \varphi(c) = a.\]
It was shown in \cite[Example 5.5]{DKL1} that $\varphi$ is fully irreducible. Next, let
\[ G = F_3 \rtimes_\varphi \Z = \langle a,b,c,r \mid r^{-1} \, x \, r = \varphi(x) \mbox{ for all } g \in F_3 \rangle \]
be the free-by-cyclic group determined by $\varphi$, and let $u_0 \colon G \to \Z$ in $\Hom(G;\R) =  H^1(G;\R)$ be the associated homomorphism obtained by sending $r$ to $1$ and all other generators to $0$.  

In \cite{DKL1}, we construct a cone $\A \subset H^1(G;\R)$ containing $u_0$ with the property that every other primitive integral element $u \in \A$ has kernel $\ker(u)$ a finitely generated free group.
The action of $u(G) =\Z$ on $\ker(u)$ is generated by a \emph{monodromy automorphism} $\varphi_u \in \Aut(\ker(u))$ determining an expression of $G$ as a semidirect product $G \cong \ker(u) \rtimes_{\varphi_u} \Z$ with associated homomorphism $u$. One of the main results of \cite{DKL1} is that all such $\varphi_u$ are fully irreducible.

In \cite{DKL2}, we construct a strictly larger open, convex cone $\A \subsetneq \Csec \subset H^1(G;\R)$ and a function
\[ \mathfrak H \colon \Csec \to \R \]
that is convex, real analytic, and homogeneous of degree $-1$ (i.e., $\mathfrak H (t u) = \frac{1}{t} \mathfrak H(u)$) such that
\[ \log(\lambda(\varphi_u)) = \mathfrak H(u) \]
for any primitive integral class $u \in \A$. In fact this holds for all primitive integral $u \in \Csec$ with the appropriate interpretation of $\lambda(\varphi_u)$.  We also show that $\Csec$ is the cone on the component of the BNS-invariant $\BNS(G)$ \cite{BNS} containing $u_0$ \cite[Theorem I]{DKL2}  
and that $\A$ lies over the symmetrized BNS-invariant (that is, both $\A$ and $-\A$ project into $\BNS(G)$) \cite[Corollary 13.7]{DKL2}.
In fact, a key result of Bieri--Neumann-Strebel is that an integral class $u\in \Hom(G;\Z)$ has $\ker(u)$ finitely generated if and only if both $u$ and $-u$ lie in the $\BNS(G)$ \cite{BNS}.

The the homomorphism $-u_0$ has $\ker(-u_0) = \ker(u_0) = F_N$ and associated monodromy $\varphi^{-1}$, thus expressing $G$ as $F_N\rtimes_{\varphi^{-1}}\Z$. Since $\varphi^{-1}$ is also fully irreducible, the main result of \cite{DKL2} provides another open, convex cone $\Csec_-\subset H^1(G;\R)$ containing $-u_0$ and a corresponding convex, real analytic, homogeneous of degree $-1$ function $\mathfrak H_-\colon \Csec_-\to \R$. 
Since $-\A$ projects into $\BNS(G)$ and $\Csec_-$ is the cone on the component of $\BNS(G)$ containing $-u_0$, we see that $-\A\subset \Csec_-$. Thus $\mathfrak H_-$ calculates the inverse stretch factors
\[\mathfrak H_-(-u) = \log(\lambda(\varphi_u^{-1}))\]
for all primitive integral $u\in \A$.


Example 8.3 of \cite{DKL2} exhibits a primitive integral class $u_1 \in \Csec$ which lies on the boundary of $\A$ (see \cite[Figure 8]{DKL2}) 
for which $\ker(u_1)$ is {\em not} finitely generated. It follows that $-u_1$ is {\em not} in the BNS--invariant. The key observation is that $-u_1$ then necessarily lies on the boundary of $\Csec_-$ (since $-u_1\in \overline{-\A}\subset \overline{\Csec_-}$ but $-u_1\notin \Csec_-$). 
Let $\{u_n \}_{n=2}^\infty \subset \A$ be primitive integral classes protectively converging to $u_1$.  That is, there exists $\{t_n\}_{n=2}^\infty \subset \R$ so that $\displaystyle{ \lim_{n \to \infty} t_n u_n = u_1}$.  Since this convergence occurs inside $\Csec$, it follows that
\[ \lim_{n \to \infty} \mathfrak H(t_n u_n)  = \mathfrak H(u_1) < \infty.\]
On the other hand, since $\displaystyle{\lim_{n \to \infty} -t_n u_n = -u_1\in \partial \Csec_-}$, it follows from \cite[Theorem F]{DKL2} that
\[ \lim_{n \to \infty} \mathfrak H_-(-t_n u_n) = \infty.\]
Therefore, appealing to the homogeneity of $\mathfrak H$ and $\mathfrak H_-$, we have
\[ \lim_{n \to \infty} \frac{\log(\lambda(\varphi_{u_n}^{-1}))}{\log(\lambda(\varphi_{u_n}))}  = \lim_{n \to \infty} \frac{\mathfrak H_-(-u_n)}{\mathfrak H (u_n)} = \lim_{n \to \infty} \frac{\mathfrak H_-(-t_n u_n)}{\mathfrak H(t_n u_n)} = \infty. \qedhere\]

\end{proof}

\bibliography{mcapplication}{}
\bibliographystyle{alphanum}

\end{document}